\def\ga{\alpha}
\def\gb{\beta}
\def\gc{\gamma}
\def\th{\theta}
\def\gl{\lambda}
\def\gw{\omega}
\def\bgl{\boldsymbol{\gl}}
\def\bgw{\boldsymbol{\gw}}
\def\bth{\boldsymbol{\th}}
\def\gW{\Omega}
\def\bb{{\bf b}}
\def\bq{{\bf q}}
\def\bs{{\bf s}}
\def\bt{{\bf t}}
\def\bv{{\bf v}}
\def\bw{{\bf w}}
\def\bx{{\bf x}}
\def\by{{\bf y}}
\def\bz{{\bf z}}
\def\b0{{\bf0}}
\def\x{\times}
\def\d{\partial}
\def\<{\langle}
\def\>{\rangle}
\def\bbR{{\mathbb R}}
\def\Lse{\mathfrak{se}}
\def\Ad{\mathrm{Ad\,}}
\def\ad{\mathrm{ad\,}}
\def\({\left(}
\def\){\right)}
\def\rank{\mathrm{rank}\,}
\def\T{\mathrm{T}}
\def\dfrac#1#2{\displaystyle{#1\over#2}}
\newcommand{\TheTitle}{Hyperbolic pseudoinverses for kinematics in the Euclidean group} 
\newcommand{\TheAuthors}{P.~Donelan and J. M.~Selig}
\title{{\TheTitle}\thanks{Submitted to the editors 2016/08/23.
}}
\author{
  P.~Donelan\thanks{School of Mathematics and Statistics, Victoria University of Wellington, Wellington, New Zealand
    (\email{peter.donelan@vuw.ac.nz}).}
  \and
  J. M.~Selig\thanks{School of Engineering, London South Bank University, London, UK (\email{seligjm@lsbu.ac.uk}).}
}
\begin{document}

\maketitle

\begin{abstract}
The kinematics of a robot manipulator are described in terms of the mapping connecting its joint space and the 6-dimensional Euclidean group of motions $SE(3)$.  The associated Jacobian matrices map into its Lie algebra $\Lse(3)$, the space of twists describing infinitesimal motion of a rigid body.  Control methods generally require knowledge of an inverse for the Jacobian. However for an arm with fewer or greater than six actuated joints or at singularities of the kinematic mapping this breaks down.  The Moore--Penrose pseudoinverse has frequently been used as a surrogate but is not invariant under change of coordinates. Since the Euclidean Lie algebra carries a pencil of invariant bilinear forms that are indefinite, a family of alternative hyperbolic pseudoinverses is available.  Generalised Gram matrices and the classification of screw systems are used to determine conditions for their existence.  The existence or otherwise of these pseudoinverses also relates to a classical problem addressed by Sylvester concerning the conditions for a system of lines to be in involution or, equivalently, the corresponding system of generalised forces to be in equilibrium.  
\end{abstract}

\begin{keywords}
  Euclidean group, pseudoinverse, indefinite inner product, manipulator Jacobian, screw system
\end{keywords}

\begin{AMS}
  15A09, 17B45, 70B15
\end{AMS}

\section{Introduction}
\label{s:intro}
The direct or forward kinematics of a robot manipulator is given by a mapping from its joint space $M$, an $m$-dimensional smooth manifold where $m$ is the number of degrees of freedom of its joints, into the Euclidean motion group $SE(3)$ of rigid motions of Euclidean 3-space $E^3$---the space of positions for its end-effector or hand.  Under a choice of orthonormal coordinates for $E^3$, the 6-dimensional Lie group $SE(3)$ is isomorphic to the semi-direct product $SO(3)\ltimes\bbR^3$, where the components represent rotations about the origin and translations, respectively.   Its instantaneous kinematics are given by the derivative of the kinematic mapping at a given configuration $q\in M$, and can be represented by a  Jacobian matrix $J$, giving a linear map from the space $\bbR^m\cong T_qM$ of joint velocities to the space of task or end-effector velocities, i.e. the Lie algebra $\Lse(3)$ associated to the Euclidean group, whose elements are termed {\em twists}.  

A central problem in robotics is to determine the inverse kinematics, enabling a path to be found in the joint space that will give rise to a desired motion of the robot manipulator's end-effector.  The existence of a solution and its uniqueness will depend on the number of joints---the manipulator may have redundancy ($m>6$) or be under-actuated ($m<6$)---and the presence of singularities, that is $\rank J<\min(m,6)$, even in the case $m=6$, is likely to have an effect.  Explicit solutions are known for some classes of serial manipulator, such as wrist-partitioned arms~\cite{pieper}.  However, even when a solution to the inverse kinematics is known it may involve high joint acceleration and torques on the components, so control algorithms are employed.  These generally require inversion of the Jacobian.  If the number of joints is different from six, so the Jacobian matrix is not square, or in the presence of singularities of the kinematic mapping, the need arises to adapt such control algorithms. To that end, Moore--Penrose pseudoinverses have been employed~\cite{whitney}. 

However, there is an intrinsic problem for the Euclidean group with this pseudoinverse~\cite{dot}.  Its definition and existence depend on the choice of positive-definite inner product on the vector spaces involved. In the case of instantaneous kinematics, the matrices in question are Jacobians of the derivative of the kinematic mapping between differentiable manifolds so that one requires a Riemannian metric, conferring such an inner product on each tangent space.  In the case of the joint space $M$, one can assign a positive definite inertia metric.  Given that $SE(3)$ is a Lie group, it is preferable to employ a bi-invariant metric.  However  no such metric exists, as was shown by Loncaric~\cite{loncaric} and Lipkin and Duffy~\cite{lipkin}. The latter pointed out that many robot applications propose using Euclidean metrics on
twists but these metric are not bi-invariant and hence problems can arise. In particular, in some control applications an error signal can depend simply on the choice of origin.  Doty {\em et al}~\cite{dot} observed the related problem of incommensurate units in some robot applications: quantities with different physical units have been combined to produce possible metrics used to compare different systems.  A different choice of units can alter the rank order of systems studied. This problem would be solved if the quantities derived were invariant
with respect to rigid changes of coordinates. Some application may require quantities which are also invariant with respect to change of scale.

Since one would therefore wish the Jacobian matrix to be invariant under (orthogonal) transformation in the coordinates of the ambient space and the end-effector, then the appropriate quadratic form on the Lie algebra should be adjoint invariant.  While there is no bi-invariant positive definite form, there exists a family of bi-invariant indefinite quadratic forms on $\Lse(3)$, the {\em pitch forms} parametrised by the pitch $h\in\mathbb{R}\cup\{\infty\}$ that, apart from the exceptional case $h=\infty$, are non-degenerate.  

The aim of this paper is to determine the extent to which the theory of pseudoinverses can be applied to the instantaneous kinematics of manipulators using the pitch forms.  The theory of pseudoinverses for indefinite inner product spaces has been developed by Kamaraj and Sivakumar~\cite{kamaraj}. There is also a growing literature on the associated question of matrix decompositions (polar and singular value) for indefinite inner product spaces~\cite{boj,bolsh} that also has relevance for robot kinematics, since performance indices such as manipulability~\cite{dot2,paul,yosh} are defined in terms of a positive definite inner product.  The main tools used in the study are Gram matrices~\cite{seldon} and the classification of screw systems due to Hunt and Gibson~\cite{gibson,dongib2}. 

In \cref{s:pinverse}, the theory of hyperbolic $h$-pseudoinverses is presented.  Then, in \cref{s:gram}, the role of the associated Gram matrices  together with the existence of the pseudoinverses is explored.   In particular, the classical problem, solved by Sylvester, of lines in involution is considered from this perspective in \cref{s:involution}.  \Cref{s:none} addresses the existence of screw systems for which no $h$-pseudoinverse exists for any $h$.   Application of these $h$-pseudoinverses to projection operators in shared and hybrid robotic control follows in \cref{s:proj} before some concluding remarks.

\section{Pseudoinverses in the Euclidean Group}
\label{s:pinverse}
\subsection{Invariant forms on the Euclidean Lie algebra}
\label{ss:invariant}
Notation for the Euclidean group and its Lie algebra are briefly presented here.  Further details can be found in~\cite{selig}. Given an orthonormal frame of coordinates $Ox_1x_2x_3$ in Euclidean 3-space $E^3$, the  Euclidean group $SE(3)$ of orientation-preserving rigid displacements is isomorphic to the semi-direct product $SO(3)\ltimes\bbR^3$ of rotations about the origin and translations.  Without such a choice of coordinates, there is no means to identify points of $E^3$ nor elements of $SE(3)$. So, to avoid unnecessary complication, it is usual to assign a preferred choice of coordinates (and hence identify $E^3$ with $\bbR^3$) with respect to which $SE(3)$ will be identified with the semi-direct product. From now on we assume this is the case.

It is frequently useful to identify the translation vector $\bt=(t_1,t_2,t_3)^\T \in\bbR^3$ with the skew-symmetric matrix $T$ whose action on $\bx\in \bbR^3$ is identical to the vector product $T\bx=\bt\x\bx$, namely
\[
T=\begin{pmatrix}0&-t_3&t_2\\t_3&0&-t_1\\-t_2&t_1&0\end{pmatrix}.
\]
Hence an element of $SE(3)$ can be represented by a pair $(R,T)$ where $R\in SO(3)$, so $R^\T  R=I_3$, $\det R=1$, and $T$ is skew-symmetric. Let $R(\th_i)$ denote rotation about the axis $Ox_i$ by an angle $\th_i$ and $T(t_i)$ denote translation by $t_i$ parallel to $Ox_i$, $i=1,2,3$. Composition in the semi-direct product is given by:
\[
(R_2,T_2)\circ(R_1,T_1)=(R_2R_1,R_2T_1R_2^\T +T_2).
\]
Note that the translations form a normal subgroup while the rotation subgroup is not normal. 

The Lie algebra $\Lse(3)$ of the Euclidean group is a 6--dimensional real vector space.  Writing $I_3,O_3$ for the $3\x3$ identity and zero matrices respectively, let
\begin{equation}
\label{eq:sebasis}
\bgw_i=\left.\frac{d}{d\th_i}(R(\th_i),O_3)\right|_{\theta_i=0},\quad \bv_i=\left.\frac{d}{dt_i}(I_3,T(t_i))\right|_{t_i=0},\quad i=1,2,3.
\end{equation}
Then $(\bgw_1,\bgw_2,\bgw_3,\bv_1,\bv_2,\bv_3)$ forms a basis for the Lie algebra.  The first three elements are infinitesimal rotations about the coordinate axes and the last three infinitesimal translations parallel to those axes.  Coordinates with respect to this basis are termed {\em Pl\"ucker coordinates} and they form a natural generalisation of the coordinates of the same name used for lines in projective 3-space. An element $\bs\in\Lse(3)$ is called a {\em twist}.  By a slight abuse of column vector notation, we generally write a twist in Pl\"ucker coordinates  as $\bs=(\bgw,\bv)$.  It is also convenient to introduce the skew-symmetric matrices $\gW$ and $V$ that are determined by the actions: for all $\bx\in\bbR^3$, $\gW\bx=\bgw\x\bx$, $V\bx=\bv\x\bx$, respectively. 

A change of orthonormal frame in $E^3$ (or origin and axes in $\bbR^3$) corresponds to conjugation in $SO(3)\ltimes\bbR^3$ and this gives rise to the adjoint action of the group on its Lie algebra.  Specifically, this action is described in Pl\"ucker coordinates by:
\begin{equation}
\label{eq:Adj}
\Ad(R,T)(\bgw,\bv)=\begin{pmatrix}R&O\\TR&R\end{pmatrix}\begin{pmatrix}\bgw\\\bv\end{pmatrix}.
\end{equation}
Differentiating the adjoint action with respect to $SE(3)$ gives the adjoint of the Lie algebra, namely its Lie bracket. This has the (partitioned) matrix form:
\begin{equation}
\label{eq:adj1}
\ad(\bgw_1,\bv_1)(\bgw_2,\bv_2)=\begin{pmatrix}\gW_1&O\\V_1&\gW_1\end{pmatrix}\begin{pmatrix}\bgw_2\\\bv_2\end{pmatrix}
\end{equation}
or in Pl\"ucker vector form:
\begin{equation}
\label{eq:adj2}
\ad(\bgw_1,\bv_1)(\bgw_2,\bv_2)=[(\bgw_1,\bv_1),(\bgw_2,\bv_2)]=(\bgw_1\x\bgw_2,\bgw_1\x\bv_2+\bv_1\x\bgw_2).
\end{equation}

A natural geometry intrinsic to $\Lse(3)$ is given by a bilinear form $\<\cdot,\cdot\>$ invariant under the adjoint action of the Lie group or algebra in the sense that for all $\bs_1,\bs_2\in\Lse(3)$ and $G\in SE(3)$:
\begin{subequations}
\label{eq:inv}
\begin{align}
\<\bs_1,\bs_2\>&=\<\Ad(G)(\bs_1),\Ad(G)(\bs_2)\>\label{eq:inv1}\\
\intertext{or, equivalently, for all $\bs\in\Lse(3)$}
0&=\<[\bs,\bs_1],\bs_2\>+\<\bs_1,[\bs,\bs_2]\>\label{eq:inv2}
\end{align}
\end{subequations}
The following theorem describing the invariant forms on $\Lse(3)$ is well known. An equivalent version relates to the form of pseudo-riemannian metrics on $SE(3)$~\cite{loncaric}. 

\begin{theorem}
\label{th:invform}
A quadratic form $\<\cdot,\cdot\>$ on $\Lse(3)$ is adjoint-invariant if and only if it has the form, for all $\bs_1,\bs_2\in\Lse(3)$:
\begin{equation}
\label{eq:invform}
\<\bs_1,\bs_2\>_{\ga,\gb}=\frac12\bs_1^\T \begin{pmatrix}-2\ga I_3&\gb I_3\\\gb I_3&O_3\end{pmatrix}\bs_2=:\bs_1^\T  Q_{\ga,\gb}\bs_2,
\end{equation}
for some $\ga,\gb\in\bbR$.
\end{theorem}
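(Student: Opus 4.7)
The plan is to represent any symmetric bilinear form as $\<\bs_1,\bs_2\>=\bs_1^\T Q\bs_2$ with $Q$ a symmetric $6\times 6$ matrix, and then exploit the infinitesimal form of the invariance condition \cref{eq:inv2}, which is equivalent to the matrix identity $\ad(\bs)^\T Q+Q\,\ad(\bs)=0$ holding for every $\bs\in\Lse(3)$. The converse direction---that every matrix of the claimed shape yields an invariant form---is a direct check, or follows by recognising $\<\cdot,\cdot\>_{\ga,\gb}$ as a linear combination of the Killing form on $\Lse(3)$ and the reciprocity (Klein) form, so the substantive work is the forward direction.

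To carry this out I would partition $Q=\begin{pmatrix}A&B\\B^\T&C\end{pmatrix}$ into $3\times 3$ blocks with $A$ and $C$ symmetric, insert the partitioned expression for $\ad(\bs)$ from \cref{eq:adj1}, and read off four block equations involving the unknowns $A,B,C$ together with the free skew matrices $\gW,V$ associated to $\bgw,\bv$. Since $\bgw$ and $\bv$ vary independently, each block equation splits further into a $\gW$-part and a $V$-part, and the identification of $Q$ is driven by solving these in the right order.

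The technical engine is then a sequence of Schur-type arguments. The bottom-right block reduces to $[C,\gW]=0$ for every skew $\gW$, and since $\Lso(3)$ acts irreducibly on $\bbR^3$ this forces $C=cI_3$. The top-right block with $\gW=0$ then reads $cV=0$ for every skew $V$, so $c=0$. With $C=0$, the $\gW$-part of the top-right block gives $[B,\gW]=0$, hence $B=bI_3$, and the $\gW$-part of the top-left block similarly yields $A=aI_3$. The residual $V$-contributions (most notably $BV-VB^\T=0$ in the top-left block) are then automatic because $B$ is a scalar matrix, and no further constraints arise. Writing $\ga=-a$ and $\gb=2b$ reproduces the normalisation in \cref{eq:invform}.

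The main obstacle is bookkeeping rather than anything conceptually deep: one has to isolate the $\bgw$- and $\bv$-contributions within each of the four blocks and apply the commutant-of-$\Lso(3)$ argument in the correct sequence (bottom-right, then top-right, then top-left), since a different ordering leaves $V$-terms entangled with the still-unknown $B$ and $C$. Once that sequence is in place, the two-parameter family $Q_{\ga,\gb}$ drops out of the block equations essentially by force.
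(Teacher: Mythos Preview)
Your argument is correct. The infinitesimal invariance condition $\ad(\bs)^\T Q+Q\,\ad(\bs)=0$, after inserting the block form of $\ad(\bs)$ from \cref{eq:adj1} and the partition $Q=\begin{pmatrix}A&B\\B^\T&C\end{pmatrix}$, yields exactly the four block identities you describe, and the order in which you solve them (bottom-right, then top-right, then top-left) is the right one: Schur's lemma for the irreducible $\Lso(3)$-action on $\bbR^3$ forces $C=cI_3$, the $V$-part of the top-right block kills $c$, then the $\gW$-parts force $B$ and $A$ to be scalar, and the remaining $V$-terms vanish automatically. The reparametrisation $\ga=-a$, $\gb=2b$ is consistent with the normalisation in \cref{eq:invform}.

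As for comparison with the paper: there is nothing to compare. The paper does not supply a proof of \cref{th:invform}; it merely records the result as ``well known'' and points to \cite{loncaric} for an equivalent formulation in terms of bi-invariant pseudo-Riemannian metrics on $SE(3)$. Your write-up therefore fills a gap the paper deliberately leaves, and the method you use---reducing to the commutant of $\Lso(3)$ block by block---is the standard one for this computation.
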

Any two such forms for which the pairs $(\ga,\gb)$ differ only by a non-zero multiplicative constant are essentially the same, so that there is a pencil of invariant forms.  If $\gb\neq0$, then let $h=\ga/\gb$ and let $Q_h$ denote the matrix in \cref{eq:invform} scaled by $1/\gb$ and $\<\cdot,\cdot\>_h$ the associated bilinear form.  In the case $\gb=0$ then for $\ga=1$, denote the matrix and form by $Q_\infty$ and $\<\cdot,\cdot\>_\infty$ respectively.  Clearly $Q_{\ga,\gb}=\ga Q_\infty+\gb Q_0$. Note that $Q_\infty$ is one-half of the Killing form of $\Lse(3)$; $Q_0$ is termed the {\em Klein form}.  Not only do these forms span the invariant quadratic forms but they also generate the ring of all polynomial invariants of the adjoint action~\cite{dongib1}.

It is straightforward to confirm that for $h\neq\infty$, $Q_h$ is non-degenerate but indefinite with index $(3,3)$, while $Q_\infty$ is negative semi-definite of rank~3 (correlating to the fact that $SE(3)$ is not semi-simple). In particular, there is no adjoint-invariant inner product on $\Lse(3)$.  

\subsection{Screws and screw systems}
\label{ss:screws}
In Pl\"ucker coordinates, $\bs=(\bgw,\bv)\in\Lse(3)$, the corresponding quadratic forms are:
\begin{equation}
\label{eq:quadforms}
\bs^\T  Q_h\bs=-h\,\bgw.\bgw+\bgw.\bv
\end{equation}
Hence, if $\bgw\neq\b0$,  $\bs$ lies on the nullcone of $Q_h$ if and only if $h=\bgw.\bv/\bgw.\bgw$ while if $\bgw=\b0$ then $\bs$ lies on the nullcone of every $Q_h$, $h\in\bbR\cup\{\infty\}$. Denote by $q_h$ the quadric hypersurface corresponding to $Q_h$ for $h$ finite, and the 3-dimensional subspace $\bgw=\b0$ for $h=\infty$. 

It is often more convenient to work projectively, that is in $\mathbb{P}\Lse(3)$. Let $\widetilde{q}_h$ denote the projective quadric and                then the collection of subsets:
\begin{equation}
\label{e:partition}
\{\widetilde{q}_h-\widetilde{q}_\infty\,:\,h\in\bbR\}\cup\{\widetilde{q}_\infty\}
\end{equation}
partitions $\mathbb{P}\Lse(3)$. Viewed in this projective 5-space, a one-dimensional subspace in $\Lse(3)$, spanned by a non-zero twist, is a point called a {\em screw}.  The projective hyperquadrics have two rulings by (projective) planes: the $\ga$-planes corresponding to the set of screws whose axes pass through a given point in $E^3$ and the $\gb$-planes consisting of those screws whose axes lie in a given plane in $E^3$. Note that $\widetilde{q}_\infty$ is the $\gb$-plane at infinity.

From the point of view of the kinematics of rigid bodies in 3-space, a twist $\bs=(\bgw,\bv)\in q_h$ determines a (Killing) vector field on $E^3$. In the general case $h\neq\infty$, the corresponding exponential is the family of motions that fix a line in $E^3$ and combine rotation about that line with translation along it in the fixed ratio~$h$.  That is, the integral curves are helices of  pitch~$h$ about the axis.   The component  $\bgw\in\bbR^3$ of the twist's Pl\"ucker coordinates represents the direction vector of the axis; $\bv\in\bbR^3$ combines the moment of the axis about the origin with the translation along it.  The case $h=0$ corresponds to pure rotation: in this case the twist satisfies the Klein quadric $\bs^\T  Q_0\bs=0$.  An element  $\bs\in q_\infty$ corresponds to pure translation parallel to $\bv$. 

The motions determined by the one-parameter subgroups, $\exp \th\bs$, $\bs\in q_h$, are precisely the motions of a rigid body constrained to move relative to the ambient space by a one degree-of-freedom joint, where the joint is:
\begin{itemize}
\item 
revolute (R) if $h=0$
\item
helical (H) if $h\neq0,\infty$
\item
prismatic (P) if $h=\infty$.
\end{itemize}

Robot arms and manipulators typically consist of a chain of $m$~rigid links connected in series by such joints though, in practice, helical joints are rarely used. The final link is referred to as the end-effector and its motion in $E^3$ is a function of the joint variables $\th_1,\dots,\th_m$. This determines the kinematic mapping $f:\bbR^m\to SE(3)$, whose derivative describes the instantaneous kinematics of the end-effector.  If one assumes that for a given $\bth\in\bbR^m$, $f(\bth)=e$, the identity in $SE(3)$, then the derivative is a map $\bbR^m\to\Lse(3)$ and can be represented by a Jacobian matrix $J$ with respect to Pl\"ucker coordinates in $\Lse(3)$. The columns of $J$ are the twists $\bs_1,\dots,\bs_m$ corresponding to the current configuration of the arm's joints. 

The columns of $J$ span its image and form a subspace of $\Lse(3)$ whose dimension is the rank of $J$. Such subspaces are referred to as {\em screw systems} or, if the rank is~$k$, then simply $k$-systems.  There is an action of $SE(3)$ on the set of $k$-systems (a Grassmannian manifold over the Lie algebra) induced by the adjoint action.  A classification of $k$-systems was established by Gibson and Hunt~\cite{gibson} and, along similar lines, by Rico and Duffy~\cite{rico}. A refinement and further properties of the Gibson--Hunt classification were established in~\cite{dongib2}. In particular, the invariance with respect to the adjoint action was a central aspect of the last work cited.

Two twists $\bs_1,\bs_2\in\Lse(3)$ are said to be {\em reciprocal} if $\<\bs_1,\bs_2\>_0=\bs_1^\T  Q_0\bs_2=0$. In particular, elements $\bs\in q_0$ are self-reciprocal.  Given a $k$-system $S$, define its reciprocal subspace by: 
\begin{equation}
\label{e:reciprocal}
S^\perp=\{\bs_1\in\Lse(3)\,:\,\text{for all}\; \bs\in S, \<\bs_1,\bs\>_0=0\}.
\end{equation}
Since $Q_0$ is non-degenerate, $S^\perp$ is a $(6-k)$-system. This can be generalised for $h$ finite:  $\bs_1,\bs_2\in\Lse(3)$ are said to be {\em $h$--reciprocal} if $\<\bs_1,\bs_2\>_h=0$ and for a screw system $S$ define the {$h$--reciprocal screw system}:
\begin{equation}
\label{e:hreciprocal}
S^{\perp h}=\{\bs_1\in\Lse(3)\,:\,\text{for all}\; \bs\in S, \<\bs_1,\bs\>_h=0\}.
\end{equation}

\subsection{$h$-pseudoinverses}
\label{ss:hpi}
The Moore--Penrose pseudoinverse of an $n\x m$ matrix $A$  (see, for example,~\cite{bengrev}) that defines a linear transformation between $\bbR^m$ and $\bbR^n$, each equipped with the standard Euclidean inner product, is an $m\x n$ matrix $A^+$ such that:
\begin{enumerate}[{(P}1{)}]
\item\label{P1} $AA^+A=A$
\item\label{P2} $A^+AA^+=A^+$
\item\label{P3} $(AA^+)^\T =AA^+$ 
\item\label{P4} $(A^+A)^\T =A^+A$
\end{enumerate}
For any matrix $A$, there exists a unique pseudoinverse $A^+$~\cite{penrose}. If $m\leq n$ and $\rank A=m$ (so that it is injective as a linear transformation) then 
\begin{subequations}
\begin{equation}
\label{e:piinj}
A^+=(A^\T  A)^{-1}A^\T 
\end{equation} while if $m\geq n$ and $\rank A=n$ ($A$ surjective) then 
\begin{equation}
\label{e:pisurj}
A^+=A^\T (AA^\T )^{-1}.
\end{equation}
\end{subequations}
However, when $A$ does not have maximal rank then one requires full rank factorisations or algorithmic methods to evaluate $A^+$.

The significance of the inner products is that the matrix transpose in (P\ref{P3}) and (P\ref{P4}) corresponds to the {\em adjoint} transformation. That is, if $L:V\to W$ is a linear transformation between real or complex vector spaces $V,W$ equipped with hermitian inner products $\<\cdot,\cdot\>_V$, $\<\cdot,\cdot\>_W$ then the adjoint of $L$ is the linear transformation $L^*:W\to V$ that satisfies, for all $\bv\in V$, $\bw\in W$:
\begin{equation}
\label{eq:adjtrans}
\<\bv,L^*(\bw)\>_V=\<L(\bv),\bw\>_W.
\end{equation}
The adjoint $L^*$ coincides with the dual of $L$ under the identification of the dual spaces $V^*$, $W^*$ with $V,W$ induced by their respective inner products.  

Recall that the Jacobian matrix of a kinematic mapping is a $6\x m$ matrix representing a linear transformation between the space $\bbR^m$ of joint velocities and $\Lse(3)$, the space of twists.  The joint velocity space carries a natural positive-definite inner product---in physical terms the inertia matrix, which we shall assume to be in standard form and so represented by the identity $I_m$.  However the twist space carries a pencil of indefinite forms $Q_h$ which are non-degenerate for $h$ finite. In the case that there are indefinite non-degenerate inner products on the domain and range of a linear transformation, there is still a well-defined adjoint as in \cref{eq:adjtrans}. If the symmetric matrix representations of the inner products on $V,W$ are $M,N$ respectively then the generalised adjoint of a matrix $A$ representing the linear transformation $L:V\to W$ has the form:
\begin{equation}
\label{eq:genadj}
A^{*MN}=M^{-1}A^*N.
\end{equation}
In particular, for a manipulator Jacobian $J$ there is a one-parameter family of generalised adjoints:
\begin{equation}
\label{eq:jacpi}
J^{*h}=J^\T  Q_h.
\end{equation}

Kamaraj and Sivakumar~\cite{kamaraj} develop the theory of pseudoinverses in this generalised setting.  Adapting this to the case at hand, define an {\em $h$--pseudoinverse} of an $m$--variable manipulator Jacobian $J$ to be an $m\x 6$ matrix $J^{+h}$ such that
\begin{enumerate}[{(hP}1{)}]
\item\label{hP1} $JJ^{+h} J=J$
\item\label{hP2} $J^{+h} JJ^{+h}=J^{+h}$
\item\label{hP3} $(JJ^{+h})^{*h}=JJ^{+h}$ 
\item\label{hP4} $(J^{+h} J)^{*h}=J^{+h} J$
\end{enumerate}
The existence of an $h$-pseudoinverse is not guaranteed, though if it exists then it is unique.  A theorem of Kalman~\cite{kalman}, of which the following is a special case, provides a criterion for existence.

\begin{theorem}
\label{th:hpiexist}
The $6\x m$ Jacobian matrix $J$ has an $h$--pseudoinverse $J^{+h}$ if and only if $\rank J=\rank(JJ^{*h})=\rank(J^{*h} J)$.
\end{theorem}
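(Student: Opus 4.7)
The plan is to prove both directions directly, exploiting the non-degeneracy of $Q_h$ for $h$ finite: this ensures $\rank J^{*h} = \rank J$ and yields the standard duality identities $\ker L^{*h} = (\im L)^{\perp h}$ and $\im L^{*h} = (\ker L)^{\perp h}$ for any linear map $L$ between the relevant spaces.

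For the forward direction, assume an $h$-pseudoinverse $J^{+h}$ exists. From property (hP1) together with the usual rank inequalities for products, $\rank J \le \rank(JJ^{+h}) \le \rank J$, so equality holds. Taking the $*h$-adjoint of $JJ^{+h}J = J$ and applying (hP3) to the middle factor gives $J^{*h} = J^{*h}JJ^{+h}$, whence $\rank J = \rank J^{*h} \le \rank(J^{*h}J) \le \rank J$. A symmetric computation using (hP4) in the form $J^{+h}J = J^{*h}(J^{+h})^{*h}$ yields $J = JJ^{*h}(J^{+h})^{*h}$ and hence $\rank(JJ^{*h}) = \rank J$.

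For the converse, set $r = \rank J$ and assume the two rank equalities. Since $\ker J^{*h} \subseteq \ker(JJ^{*h})$ and both subspaces of $\bbR^6$ have dimension $6-r$, they coincide; this translates into $\im J^{*h} \cap \ker J = \{\b0\}$ in $\bbR^m$, so by a dimension count $\bbR^m = \im J^{*h} \oplus \ker J$. A symmetric argument, based on $\ker J \subseteq \ker(J^{*h}J)$, gives $\bbR^6 = \im J \oplus \ker J^{*h}$. Define $J^{+h}$ to vanish on $\ker J^{*h}$ and to agree on $\im J$ with the inverse of the bijection $J\colon \im J^{*h} \to \im J$ (injective by the first decomposition, with matching dimensions). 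Then $JJ^{+h}$ is the projection of $\bbR^6$ onto $\im J$ along $\ker J^{*h}$, and $J^{+h}J$ is the projection of $\bbR^m$ onto $\im J^{*h}$ along $\ker J$. Properties (hP1) and (hP2) are then immediate. For (hP3) and (hP4), the duality identities show that the $*h$-adjoints $(JJ^{+h})^{*h}$ and $(J^{+h}J)^{*h}$ are projections with the same images and kernels as $JJ^{+h}$ and $J^{+h}J$ respectively, hence equal to them.

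The main obstacle is the backward direction, which serves as the indefinite-signature analogue of the orthogonal decomposition $\bbR^n = \im A \oplus (\im A)^\perp$ underpinning the Moore--Penrose construction: the two rank hypotheses are precisely what guarantees that the natural candidate decompositions remain \emph{direct} sums, a property that can fail in the indefinite case whenever $\im J$ meets $(\im J)^{\perp h} = \ker J^{*h}$ non-trivially. The degenerate case $h = \infty$, where $Q_\infty$ has rank $3$, falls outside this framework and would need separate analysis.
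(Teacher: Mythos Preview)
The paper does not supply a proof of this theorem: it is quoted, without argument, as a special case of a theorem of Kalman~\cite{kalman}. Your proposal therefore cannot be compared against any in-paper proof; it supplies what the paper omits.

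Your argument is correct. The forward direction is routine once one uses that $Q_h$ is invertible (so $\rank J^{*h}=\rank J$). For the converse, the key step---turning the two rank hypotheses into the direct-sum decompositions $\bbR^m=\im J^{*h}\oplus\ker J$ and $\bbR^6=\im J\oplus\ker J^{*h}$---is exactly the right idea, and your verification that $\ker J^{*h}=\ker(JJ^{*h})$ (and dually) yields these decompositions is sound. The check of (hP3) and (hP4) via the adjoint of a projection is also correct; it may be worth making explicit that on $\bbR^m$ the relevant form is the standard one, so that (hP4) amounts to ordinary symmetry of $J^{+h}J$, which follows because $\ker J=(\im J^{*h})^\perp$ in the Euclidean sense (here one uses $Q_h$ invertible again, via $\ker(Q_hJ)=\ker J$). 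Your closing remark about $h=\infty$ is apt but peripheral: the paper's $h$-pseudoinverse is only considered for finite $h$.
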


The analogous equalities for the Moore--Penrose pseudoinverse hold automatically. It is straightforward to show that $\rank A=\rank A^\T  A=\rank AA^\T $. Likewise, the imposition of a positive definite inner product on the space of joint velocities and the non-degeneracy of $Q_h$ entails $\rank(JJ^{*h})=\rank(JJ^\T  Q_h)=\rank(JJ^\T )=\rank J$. However the condition $\rank J=\rank(J^{*h} J)=\rank(J^\T  Q_h J)$ can fail because the indefiniteness of $Q_h$ means that there exist non-zero $h$--self-reciprocal twists for any $h$, that is $\bs\in\Lse(3)$, $\bs\neq\b0$ such that $\bs^\T  Q_h\bs=0$. 

This gives rise to the question whether, for any $J$, there exists at least some $h$ such that the $h$--pseudoinverse $J^{+h}$ exists. This is answered in \cref{s:gram}.  Note however that if $\rank J=6$ (and so $m\geq6$) then the conditions of \cref{th:hpiexist} hold and all $h$-pseudoinverses exist; in fact they coincide with the ordinary Moore--Penrose pseudoinverse.  By analogy with~\cref{e:pisurj}, we can write:
\begin{equation}
\label{e:hpi6}
J^{+h}=(J^\T  Q_h)(JJ^\T  Q_h)^{-1}=J^\T (Q_hQ_h^{-1})(JJ^\T )^{-1}=J^\T (JJ^\T )^{-1}.
\end{equation}
This pseudoinverse is used extensively in the kinematics of redundant manipulators ($m>6$). In the next section the more interesting (from the point of view taken in this paper) case $m<6$ is considered.

\section{Gram Matrices, Screw Systems and  Existence of $h$-Pseudoinverses}
\label{s:gram}

In the case that the dimension of the domain (number of joints) $m<6$, the {\em Gram matrices} $J^\T  Q_h J$  are central to determining the existence of an $h$--pseudoinverse. Suppose that the $6\x m$ matrix $J$, whose columns are twists in $\Lse(3)$, has rank~$m$, so that the twists are independent and span an $m$-system in $\Lse(3)$.  Since the forms $Q_h$ are invariants of the adjoint action, the existence of $J^{+h}$ depends only on the equivalence class of this screw system.  Note that in the maximum rank case the $h$--pseudoinverse is determined by the formula analogous to \cref{e:piinj}:
\begin{equation}
\label{e:hpiinj}
J^{+h}=(J^\T  Q_h J)^{-1}J^\T  Q_h.
\end{equation}

The Gibson--Hunt classification~\cite{gibson,dongib2}  of screw systems of a given rank~$m\leq3$ is based on the way the screw system, $S$, meets the pitch hyperquadrics. Types I~and~II are distinguished by whether $S$ does not, or does, lie entirely within a single $q_h$.  A second level of distinction, $A,B,C,\dots$ distinguishes the (projective) dimension of $S\cap q_\infty$ ($A$ denotes empty intersection, and increasing dimension thereafter).  

Further refinement for type~I is provided by the projective type of the pencil of intersections $S\cap q_h$.  For example, \cref{f:3sys} shows a planar section of a 3--system and its intersections with the family of pitch hyperquadrics $q_h$.  Note that there are three values of $h$ giving singular intersections: a real line pair and two singular points which correspond to complex conjugate line pairs. These three values are termed {\em principal pitches} (see \cref{ss:3sys}) and the corresponding projective type determines class I$A_1$. These principal pitches are, in fact, moduli (continuous families of invariants) for the adjoint action of the Euclidean group on screw systems of given dimension. A further class, I$A_2$ arises when two of the principal pitches coincide. Other moduli appear in the various Gibson--Hunt classes.  

For $m=1$, a 1-system is determined by a single non-zero twist. With respect to an appropriate choice of coordinates, this can be chosen to have the form $(1,0,0;h,0,0)$ if the pitch~$h$ is finite, otherwise $(0,0,0;1,0,0)$. Each of these normal forms determines a unique equivalence class under the adjoint action. However, it may be convenient to collect all types where $h\neq\infty$ into one class; equally the case $h=0$ may be treated as a special case.   Mathematical justification for these different choices can be found in \cite{dongib2}. 

For $m=2, 3$, \cref{tab:2systems} and \cref{tab:3systems} list the classifications with normal forms for a set of generating twists for each class of screw system. The reader is referred to the cited works for further details.  Classification in the cases $m=4, 5$ is given in terms of the type of reciprocal 2-~or~1-system, respectively.

\begin{table}[ht]
\caption{Classification of 2--systems.}
\label{tab:2systems}
\begin{center}
\begin{tabular}{|l|l|l|l|}
\hline
Class	& Normal form			& Class	& Normal form\\
\hline\hline
IA   	& $(1,0,0;h_{\ga},0,0)$ & IIA	& $(1,0,0;h,0,0)$ \\
        & $(0,1,0;0,h_{\gb},0)$ &      	& $(0,1,0;0,h,0)$ \\
\hline
IB   	& $(1,0,0;0,0,0)$       & IIB  	& $(1,0,0;h,0,0)$ \\
        & $(0,0,0;1,p,0)$       &       & $(0,0,0;0,1,0)$ \\
\hline
        &                       & IIC   & $(0,0,0;1,0,0)$ \\
        &                       &       & $(0,0,0;0,1,0)$ \\
\hline
\end{tabular}
\end{center}
\end{table}

\begin{table}[ht]
\caption{Classification of 3--systems.}
\label{tab:3systems}
\begin{center}
\begin{tabular}{|l|l|l|l|}
\hline
Class	& Normal form			& Class	& Normal form\\
\hline\hline
IA$_1$  & $(1,0,0;h_{\ga},0,0)$  &  IIA & $(1,0,0;h,0,0)$  \\
        & $(0,1,0;0,h_{\gb},0)$  &      & $(0,1,0;0,h,0)$  \\
        & $(0,0,1;0,0,h_{\gc})$  &      & $(0,0,1;0,0,h)$  \\
\hline
IA$_2$  & $(1,0,0;h_{\ga},0,0)$  &  IIB & $(1,0,0;h,0,0)$ \\
        & $(0,1,0;0,h_{\gb},0)$  &      & $(0,1,0;0,h,0)$ \\
        & $(0,0,1;0,0,h_{\gb})$  &      & $(0,0,0;0,0,1)$ \\
\hline
IB$_0$  & $(1,0,0;h,0,0)$        &  IIC & $(1,0,0;h,0,0)$  \\
        & $(0,1,0;0,h,0)$        &      & $(0,0,0;0,1,0)$ \\
        & $(0,0,0;1,0,p)$        &      & $(0,0,0;0,0,1)$ \\
\hline
IB$_3$  & $(1,0,0;h_{\ga},0,0)$  &  IID & $(0,0,0;1,0,0)$ \\
        & $(0,1,0;0,h_{\gb},0)$  &      & $(0,0,0;0,1,0)$ \\
        & $(0,0,0;0,0,1)$        &      & $(0,0,0;0,0,1)$ \\
\hline
IC      & $(1,0,0;0,0,0)$        & &\\
        & $(0,0,0;0,1,0)$        & &\\
        & $(0,0,0;1,0,p)$        & &\\
\hline
\end{tabular}
\end{center}
\end{table}

By \cref{th:hpiexist}, $J^{+h}$ exists if and only if the rank of the $m\x m$ matrix $J^\T  Q_h J$ is also~$m$.  The following theorem gives a set of equivalent conditions for this to fail.

\renewcommand{\theenumi}{\roman{enumi}}
\renewcommand{\labelenumi}{\rm{(\theenumi)}}

\begin{theorem}
\label{th:nohpi}
Suppose $J$ is a matrix representing a linear transformation $\bbR^m\to\Lse(3)$ where $m<6$ and $\rank J=m$.  Let $S$ denote the $m$-system formed by the image of the transformation. For any finite $h$, the following are equivalent:
\begin{enumerate}
\item
$\rank(J^\T  Q_h J)<m$.
\item
$S\cap S^{\perp h}$ is non-trivial.
\item
$S$ is contained in the tangent space to $q_h$ at some non-zero point.
\end{enumerate}
\end{theorem}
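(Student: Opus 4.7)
The plan is to prove the chain (i) $\Leftrightarrow$ (ii) $\Leftrightarrow$ (iii), each implication reducing to a short manipulation of the bilinear form $\<\cdot,\cdot\>_h$ once the correct identifications are made.

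For (i) $\Leftrightarrow$ (ii), I would work with the kernel of the symmetric $m\x m$ Gram matrix $G=J^\T Q_hJ$; its rank is less than $m$ precisely when $\ker G\neq\{\b0\}$. A vector $\bx\in\bbR^m$ lies in $\ker G$ iff $\by^\T J^\T Q_hJ\bx=\<J\by,J\bx\>_h=0$ for every $\by\in\bbR^m$, i.e.\ iff $J\bx$ is $h$--reciprocal to every column of $J$, that is, $J\bx\in S^{\perp h}$. Since $\rank J=m$ the map $J:\bbR^m\to S$ is a bijection, so a nontrivial kernel corresponds exactly to a nontrivial intersection $S\cap S^{\perp h}$.

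For (ii) $\Leftrightarrow$ (iii), I would use the standard identification of the tangent hyperplane to the quadric $q_h$ at a smooth (non-zero) point $\bp$ with the polar hyperplane
\[
T_\bp q_h=\{\bt\in\Lse(3):\<\bp,\bt\>_h=0\}=\bp^{\perp h},
\]
which comes from differentiating the defining form $\bs\mapsto\bs^\T Q_h\bs$ at $\bp\in q_h$. Consequently $S\sb T_\bp q_h$ is equivalent to $\bp\in S^{\perp h}$. If (ii) holds, any nonzero $\bp\in S\cap S^{\perp h}$ is $h$--self-reciprocal, hence lies on $q_h$, and serves as a tangency point in $S$ witnessing (iii). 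Conversely, given a nonzero tangency point $\bp\in S\cap q_h$ with $S\sb T_\bp q_h$, the polar description immediately gives $\bp\in S\cap S^{\perp h}$.

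I expect the main point requiring care to be the reading of (iii): the tangency point should be understood as lying in $S\cap q_h$, for demanding only $\bp\in q_h$ gives the strictly weaker condition $S^{\perp h}\cap q_h\neq\{\b0\}$. (For example, with $m=1$ and $S$ spanned by a non-isotropic twist, $S^{\perp h}$ is five-dimensional and generically meets $q_h$, even though $S\cap S^{\perp h}=\{\b0\}$.) With this interpretation fixed the argument is pure linear algebra on the pencil $Q_h$, requiring no case analysis via the Gibson--Hunt classification of $S$.
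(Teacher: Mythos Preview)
Your proof is correct and follows essentially the same route as the paper: the paper proves the cycle (i)$\Rightarrow$(ii)$\Rightarrow$(iii)$\Rightarrow$(i) using exactly the identifications you describe, namely that $J$ carries $\ker(J^\T Q_hJ)$ bijectively onto $S\cap S^{\perp h}$ (injectivity of $J$ being the point where $\rank J=m$ enters), and that $T_\bp q_h$ is the polar hyperplane $\bp^{\perp h}$.

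Your closing remark is worth highlighting. The paper's proof of (iii)$\Rightarrow$(i) simply asserts ``Since $\bz\in S$'' without comment, so the authors are tacitly reading (iii) with the tangency point in $S\cap q_h$, exactly as you surmise. Your counterexample shows this reading is necessary: with $m=1$ and $S$ spanned by a non-isotropic twist, the restriction of $Q_h$ to the five-dimensional $S^{\perp h}$ has signature $(3,2)$ or $(2,3)$ and therefore has nonzero isotropic vectors, so the literal reading of (iii) would be vacuously satisfied. You have been more careful here than the paper itself.
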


\begin{proof}
By assumption, the columns of $J$, twists ${\bf s}_i$, $i=1,\dots,m$, say, are linearly independent.  

(i)$\Rightarrow$(ii). Given (i), it follows that the kernel of $J^\T  Q_h J$ is non-trivial, so there exists a non-zero vector $\bgl=(\lambda_1,\dots,\lambda_m)^\T $ such that:
\begin{equation}
\label{e:kernel}
(J^\T  Q_hJ)\bgl={\bf 0}.
\end{equation}
Let ${\bf z}=J\bgl=\lambda_1{\bf s}_1+\cdots+\lambda_m{\bf s}_m\in S$: the linear independence of the twists ensures that $\bz\neq\b0$.  Then \cref{e:kernel} can be written as:
\begin{equation}
\label{e:kernel2}
J^\T  Q_h{\bf z}={\bf 0}.
\end{equation}
The components of this vector equation give:
\begin{equation}
\label{e:hsr}
{\bf s}_i^\T  Q_h{\bf z}=\<{\bf s}_i,\bz\>_h=0,\qquad i=1,\ldots,m,
\end{equation}
so, by definition of the reciprocal system, ${\bf z}\in S^{\perp h}$.

(ii)$\Rightarrow$(iii).  Since $q_h$ is the set of twists, $\bs$, satisfying the condition $\<\bs,\bs\>_h=0$, its tangent space at a point $\by\in q_h$ is the hyperplane: 
\begin{equation}
\label{e:pqt}
T_\by q_h=\{\bs\in\Lse(3)\,:\,\<\bs,\by\>_h=0\}.
\end{equation}
Given $\b0\neq\bz\in S\cap S^{\perp h}$, then $\<\bz,\bz\>_h=0$ so that $\bz\in q_h$. Then, for all $\bs\in S$, $\bz\in S^{\perp h}$ implies $\<\bs,\bz\>_h=0$ hence, by \cref{e:pqt}, $\bs\in T_\bz q_h$.

(iii)$\Rightarrow$(i). Suppose that $S\subseteq T_\bz q_h$, $\bz\neq\b0$, so that for all $i=1,\dots,m$, $\<\bs_i,\bz\>_h=\bs_i^\T Q_h\bz=0$. Since $\bz\in S$, we have $\bz=J\bgl$ for some $\bgl\in\bbR^m$, $\bgl\neq\b0$. Hence $J^\T Q_hJ\bgl=\b0$ and so $\rank(J^\T  Q_h J)<m$.
\end{proof}

An obvious consequence is that the existence of an $h$-pseudoinverse for $J$ is determined by the associated screw system.   Condition (ii) immediately gives rise to the following useful result:

\begin{corollary}
\label{th:hrec}
With $J$ and $S$ as in \cref{th:nohpi}, $J$ has an $h$-pseudoinverse if and only if the matrix $J^{\perp h}$, arising from a basis for $S^{\perp h}$ and representing  a transformation $\bbR^{6-m}\to\Lse(3)$,  has an $h$-pseudoinverse. 
\end{corollary}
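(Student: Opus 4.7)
The plan is to reduce the claim directly to condition (ii) of \cref{th:nohpi}, exploiting the fact that $h$-reciprocation is an involution on subspaces when $h$ is finite.

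First, I would apply \cref{th:nohpi} to $J$: the $h$-pseudoinverse $J^{+h}$ exists if and only if $S\cap S^{\perp h}=\{\b0\}$. Next, I would apply \cref{th:nohpi} to $J^{\perp h}$, whose columns span the $(6-m)$-system $S^{\perp h}$: its $h$-pseudoinverse exists if and only if $S^{\perp h}\cap (S^{\perp h})^{\perp h}=\{\b0\}$. (One should note in passing that the hypotheses of \cref{th:nohpi} are met, since $\dim S^{\perp h}=6-m<6$ and a chosen basis for $S^{\perp h}$ yields a $6\times(6-m)$ matrix $J^{\perp h}$ of full column rank $6-m$.)

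The crux is therefore to verify the identity $(S^{\perp h})^{\perp h}=S$ for any subspace $S\subseteq\Lse(3)$ and any finite $h$. The inclusion $S\subseteq (S^{\perp h})^{\perp h}$ is immediate from the symmetry of $\<\cdot,\cdot\>_h$: any $\bs\in S$ pairs to zero with every element of $S^{\perp h}$. Equality of dimensions then follows from non-degeneracy of $Q_h$ for $h$ finite, which gives $\dim S^{\perp h}=6-\dim S$; applying this twice yields $\dim(S^{\perp h})^{\perp h}=\dim S$. This is the only place the finiteness hypothesis on $h$ is actually used, and it is the main (though brief) obstacle: for $h=\infty$ the form $Q_\infty$ is degenerate and the argument would break down.

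Combining these observations, $S^{\perp h}\cap (S^{\perp h})^{\perp h}=S^{\perp h}\cap S$, and the two triviality conditions coincide. Hence $J^{+h}$ exists if and only if $(J^{\perp h})^{+h}$ exists, completing the proof. The whole argument is essentially formal once the involutive nature of $\perp h$ is established, so the writeup should be a short paragraph.
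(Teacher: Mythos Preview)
Your proposal is correct and follows exactly the route the paper intends: the paper does not give a separate proof but simply remarks that condition~(ii) of \cref{th:nohpi} immediately yields the corollary, and your argument makes precisely this explicit by invoking $(S^{\perp h})^{\perp h}=S$ from the non-degeneracy of $Q_h$ for finite~$h$.
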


In the following section we look at an application involving the existence of $h$-pseudoinverses for $h=0$.

\section{Lines in Involution}
\label{s:involution}
In a pair of papers published in 1861, Sylvester~\cite{syl1,syl2} addressed the question of the circumstances under which $m\leq6$ axes of rotation could fail to provide a complete basis for describing an arbitrary rigid motion in space, a condition he termed being in {\em involution}.  Equivalently, a system of forces acting on these lines would be in equilibrium, a problem initially considered by M\"obius. Sylvester showed that the requisite condition is equivalent to $\det(J^\T Q_0J)=0$, where the columns of the $6\times m$ matrix $J$ are the Pl\"ucker coordinates of the lines in space.  It follows straight away that $m$~linearly independent  lines are in involution if and only if the pseudoinverse $J^{+0}$ does not exist.  Example~1 in \cref{s:none} illustrates this in the case $m=3$ since the columns, as zero-pitch screws, can also be thought of as lines in space. 

Sylvester established geometric and algebraic conditions for involution for various~$m$.  Here we give an alternative characterisation using \cref{th:nohpi} and clarify its connection with the classification of screw systems~\cite{gibson}.  Assume that the $m$~lines are linearly independent.  As noted following the proof of \cref{th:nohpi} the involution condition depends only on the screw system spanned by the lines, not the particular choice of lines in it. Indeed, the Sylvester determinant is invariant with respect to the adjoint action on $\Lse(3)$ and covariant with respect to choice of basis for the screw system. The existence of lines, that is zero-pitch screws, within the screw system places constraints on the possible systems that can arise.  Specifically, working projectively in $\Lse(3)$, the intersection of the screw system as a projective subspace with the (open) pitch quadric $\widetilde{q}_0-\widetilde{q}_\infty$ must contain at least~$m$ distinct points.

For $m=2$, every twist in a 2--system of type II$C$ has infinite pitch and cannot represent a line, so two lines can only be in involution if they span a 2--system of type $A$ or $B$.   Suppose two lines (necessarily in a type $A$ or $B$ system) are $\bs_1$ and $\bs_2$,  so that $\bs_1^\T Q_0\bs_1=\bs_2^\T Q_0\bs_2=0$ since they correspond to pitch zero twists.  The Sylvester condition then reduces to $\det(J^\T Q_0J)=-(\bs_1^\T Q_0\bs_2)^2=0$.  That is, for a pair of lines to be in involution they must be reciprocal, see \cref{ss:screws}.    It is well known that reciprocal lines must be coplanar~\cite{gibson}.  There are two possibilities to consider. First, if the two lines meet, that is to say if they are concurrent, then linear combinations of the two lines generate all lines through the common point of $\bs_1$ and $\bs_2$ and lying in the same plane as the original lines.  Such an arrangement of lines is sometimes referred to as a plane pencil of lines.  The 2--system is precisely this plane pencil of lines and projectively the screw system lies entirely in the open pitch quadric  $\widetilde{q}_0-\widetilde{q}_\infty$, hence is a II$A$ system with modulus $h=0$ (see~\Cref{tab:2systems}).

In the second case, $\bs_1$ and $\bs_2$ are parallel.  In this case the linear combination of a pair of parallel lines gives all the lines coplanar and parallel to the original lines, together with a line `at infinity' in a direction normal to the plane of the finite lines.  That is, the screw system meets $\widetilde{q}_\infty$ in a single point and hence  is a II$B$ system  with modulus $h=0$.


When $m=3$, the types of screw systems that contain three independent lines are restricted to the following (see~\cite{cocgibdon}): I$A$ ($h_{\max}>0$, $h_{\min}<0$), I$B_0$, I$B_3$ (principal pitches must be of opposite sign), II$A$, II$B$, II$C$ (all with principal pitch zero).  In the case I$A$ there are three principal pitches, the middle value corresponding to the quadric intersection with the corresponding pitch hyperquadric being a degenerate line pair  (see \cref{f:3sys}).   With three lines, $\bs_1,\,\bs_2$ and $\bs_3$ the Sylvester determinant reduces to $\det(J^\T Q_0J)=2(\bs_1^\T Q_0\bs_2)(\bs_2^\T Q_0\bs_3)(\bs_3^\T Q_0\bs_1)$.  So the vanishing of this determinant implies that for three lines to be in involution, at least one pair of lines must be coplanar.

Starting with this pair of lines, there are  various possibilities to consider.  First, in the case where the two coplanar lines are concurrent,  the   lines generate a plane pencil of concurrent lines in the screw system.  If the third line meets, but is not contained in the plane of the other two lines then there will be a line in the pencil meeting it that, together with the third line, will generate another plane pencil.  The 3--system will thus contain  two plane pencils of lines in two different planes, thus a I$A$ system with one of the moduli equal to 0 (in this case the moduli are the principal pitches of the screw system).  

Now it may happen that all three lines are pairwise concurrent, that is, the third line lies in the plane determined by the first two.  In this case linear combinations of the three lines will generate any line lying in the plane and hence this is a II$B$ system with modulus $h=0$, corresponding to a $\gb$-plane in the Klein quadric  $\widetilde{q}_0$.  On the other hand, if the lines all meet at a single point but are not all coplanar, then linear combinations of the lines will produce all lines through the common point.  This is sometimes called a bundle of lines and corresponds to an $\ga$-plane in the Klein quadric.  As a screw system it has type II$A$ with modulus $h=0$.

Next, suppose that the third line is parallel to the plane determined by the first two and so does not meet it at all. In the pencil determined by the first two lines there will be a line which is parallel to the third line; this line and the third one will generate a set of parallel lines in the plane determined by the two parallel lines.  As before the set of parallel lines will also contain a twist in $\widetilde{q}_\infty$ and it can be shown that these lines generate a  I$B_0$ 3--system with one modulus $h_a=0$.

Secondly, we consider the cases where two of the original lines are parallel.  All the cases where the third line meets the plane determined by the parallel lines have already been considered. So the only cases to consider are either where the third line lies in a plane parallel to the plane determined by the other two---this gives a  I$B_3$ system---or, finally, where all three lines are parallel, giving a II$C$ $(h=0)$ system.
 
In summary there are six types of 3--system which contain three lines in involution.  They are the I$A$ with one principal pitch zero, the I$B_0$ system with $h_a=0$, the I$B_3$ system, the II$A$, II$B$ and II$C$ systems all with modulus $h=0$.



For $m=4$, the reciprocity condition, \cref{th:hrec}, together with the result for $m=2$ ensure that only four lines spanning a 4--system reciprocal to a 2--system of type II$A$ or II$B$, with principal pitch zero, satisfy the Sylvester condition.  Recall that the lines in a  II$A$ 2--system with $h=0$ form a plane pencil.  So the  reciprocal system will consist of all the lines in the plane of the pencil together with all the lines through the central point of the pencil.  The lines in a 2--system of type II$B$ ($h=0$) are a set of parallel lines in a plane.  The reciprocal system will thus consist of all  lines in the plane together with all lines in space parallel to the lines in the 2--system.  

Hence a set of four linearly  independent lines in involution will comprise a set of four lines from one of these two arrangements.  It follows that the four lines are in involution precisely when they satisfy the special condition that there is an infinity of transversals---a {\em transversal} being a line reciprocal to, and therefore intersecting, all four lines.  This contrasts with four lines in a system reciprocal to a type I$A$ 2--system having two transversals and those in a system reciprocal to a I$B$ system having a single transversal.  A different approach, using Grassmann--Cayley algebra, is presented in Sturmfels, Section~3.4~\cite{sturmfels}.


For five lines ($m=5$) to be in involution the lines must span a 5--system reciprocal to a single line, as opposed to a twist of non-zero pitch, and therefore they possess a common transversal.   Another way to express this is to say that the 5 lines lie in a special linear line complex.  Clearly six lines can only be in involution if they are not independent and so span a 5--system.  

The significance of involution for a robot arm with revolute joints is this.  Suppose the arm is in a non-singular configuration, so that the corresponding locations of the joint screws are given by $m$ independent twists of pitch zero. If the twists correspond to $m$ lines in involution, then there exists a combination of wrenches generated by joint torques with respect to which the end-effector of the arm is in equilibrium.  In fact, the wrench can be expressed as $W=Q_0\bz$ where $\bz$ is the twist identified in the proof of \Cref{th:nohpi}. Note that from the second part of the proof, $\bz\in q_0$ so that it corresponds to a line and must be a transversal to the joint axes.

\section{Systems with No $h$-Pseudoinverse}
\label{s:none}
The question  also arises as to whether there are screw systems for which, for an associated Jacobian matrix, no $h$-pseudo- inverse exists for any $h$.   We consider this dimension by dimension.

An easy consequence of \cref{th:nohpi} is that if $\rank(J^\T  Q_h J)<m$ for all $h$, so that no $h$-pseudoinverse exists, then for each $h$ there exists $\bz\in S$ such that $\<\bz,\bz\>_h=0$.  If $\bz$ has finite pitch $h$ for all $h\neq\infty$, then the screw system spanned by the columns of~$J$ will contain a screw of every pitch and therefore must  include a screw of infinite pitch.  Therefore, in any case, the screw system must contain a screw of infinite pitch.  It follows that all type $A$ systems (which contain no screws of infinite pitch) have an $h$-pseudoinverse for some $h$.  To put it the other way, screw systems that have no $h$-pseudoinverse can only be of types $B$, $C$ or $D$.

\subsection{$1$--systems}
\label{ss:1sys}
Suppose $\b0\neq\bs\in\Lse(3)$ and let $S$ be the 1-system spanned by $\bs$. The $h$-pseudoinverse of $\bs$ exists if and only if $\bs^\T Q_h\bs\neq0$. This fails if and only if $\bs\in q_h$ so for screws of finite pitch almost all $h$-pseudoinverses exist. For $\bs\in q_\infty$, we have $\bs^TQ_h\bs=0$ for all finite $h$ so no $h$-pseudoinverse exists.

\subsection{$2$--systems}
\label{ss:2sys}
For a general 2-system $S$, spanned by $\bs_1,\bs_2$, columns of the Jacobian matrix $J$, the condition for singularity of the Gram matrix:
\begin{equation}
\label{e:gram}
\det(J^\T Q_hJ)=0
\end{equation}
is quadratic in $h$ and there are typically two {\em principal pitches}, say $h_\ga,h_\gb$, for which the screw system is tangent to the pitch hyperquadric~\cite{gibson}.  The coefficients $i_1,i_2,i_3$ of this quadratic $i_1h^2+i_2h+i_3$ are invariants of the screw system~\cite{selig} and \textbf{no} $h$-pseudoinverse exists if and only if all these invariants vanish.   This occurs only for systems of types II$B$ and II$C$, being planes in $\Lse(3)$ that are tangent to every $q_h$ at an infinite pitch screw $\bs_\infty\in S$. 

   \begin{figure}[h!btp]
      \centering
      \includegraphics[scale=0.3]{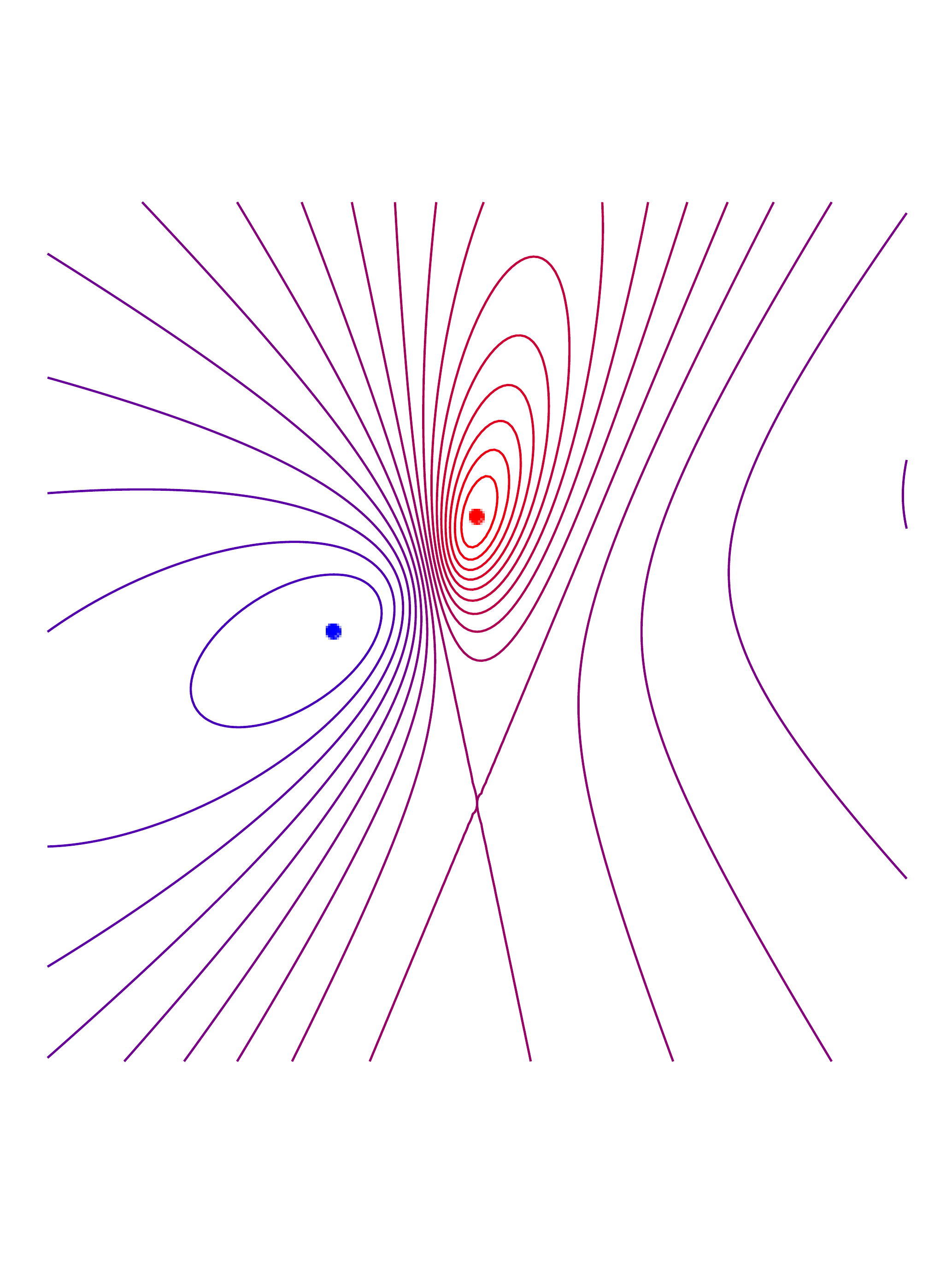}
      \caption{Intersection of type I$A$ 3--system with pitch hyperquadrics}
      \label{f:3sys}
   \end{figure}

\subsection{$3$--systems}
\label{ss:3sys}
Analogously to 2-systems and equation \cref{e:gram}, a typical (type I$A$) 3-system has three principal pitches $h_\ga,h_\gb,h_\gc$ for which the corresponding $h$-pseudoinverse fails. They are the singular quadrics in \cref{f:3sys}.  The determinant vanishes identically in $h$ if and only if the four invariant coefficients vanish.  Selig~\cite{selig} shows that this occurs for the type~II systems that contain a screw of infinite pitch (II$B$, II$C$ and II$D$) but also for two classes of type~I system, namely I$B_3$ and I$C$, corresponding to singular pencils of quadrics~\cite{dongib1}.  Other types will have one or two singular pitches with no corresponding pseudoinverse. 

\subsection{$4$-- and $5$--systems}
\label{ss:4sys}
By \cref{th:hrec}, the existence or otherwise of $h$-pseudo- inverses follows from that of the reciprocal $2$-- and $1$--systems. For example, whereas a generic $5$--system intersects $q_\infty$ in a plane, a $5$--system fails to have any $h$-pseudoinverse if and only if it wholly contains~$q_\infty$. 

A summary of the 10 types of screw systems with no $h$-pseudoinverse is given in \cref{tab:nohpi}. Note that the reciprocals of the four 3-systems have the same type.

\begin{table}[tbhp]
\caption{Screw systems with no $h$-pseudoinverse.}
\label{tab:nohpi}
\begin{center}
\begin{tabular}{|c|l|c|c|c|}
\hline
Dimension	&GH type	& Basis 	& Reciprocal	&Reciprocal
	\\
&	&	 & Dimension & Basis  \\
\hline\hline
1	& $h=\infty$  & $(0,0,0;1,0,0)$  & 5 & $(0,1,0;0,0,0)$  \\
&&&&\\[-2ex]
        &   &   &   & $(0,0,1;0,0,0)$  \\
&&&&\\[-2ex]
        &   &   &   & $(0,0,0;1,0,0)$  \\
&&&&\\[-2ex]
        &   &   &   & $(0,0,0;0,1,0)$  \\
&&&&\\[-2ex]
        &   &   &   & $(0,0,0;0,0,1)$  \\
\hline
2	& II$B$  & $(1,0,0;p,0,0)$  &  4  & $(1,0,0;-p,0,0)$  \\
&&&&\\[-2ex]
        &	&  $(0,0,0;0,1,0)$ &  &  $(0,1,0;0,0,0)$ \\
&&&&\\[-2ex]
        &	&   &  &  $(0,0,1;0,0,0)$ \\
&&&&\\[-2ex]
        &	&   &  &  $(0,0,0;0,0,1)$ \\
\hline
2	& II$C$  & $(0,0,0;1,0,0)$  &  4  & $(0,0,1; 0,0,0)$  \\
&&&&\\[-2ex]
        &	&  $(0,0,0;0,1,0)$ &  &  $(0,0,0;1,0,0)$ \\
&&&&\\[-2ex]
        &	&   &  &  $(0,0,0;0,1,0)$ \\
&&&&\\[-2ex]
        &	&   &  &  $(0,0,0;0,0,1)$ \\
\hline
3	& I$B_3$  & $(1,0,0;p,0,0)$  &  3  & $(1,0,0;-p,0,0)$  \\
&&&&\\[-2ex]
        &	&  $(0,1,0;0,q,0)$ &  &  $(0,1,0;0,-q,0)$ \\
&&&&\\[-2ex]
        &	&  $(0,0,0;0,0,1)$ &  &  $(0,0,0;0,0,1)$ \\
\hline
3	& I$C$  & $(1,0,0;0,0,0)$  &  3  & $(-p,0,1;0,0,0)$  \\
&&&&\\[-2ex]
        &	&  $(0,0,0;0,1,0)$ &  &  $(0,0,0;0,10)$ \\
&&&&\\[-2ex]
        &	&  $(0,0,0;1,0,p)$ &  &  $(0,0,0;0,0,1)$ \\
\hline
3	& II$B$  & $(1,0,0;p,0,0)$  &  3  & $(1,0,0;-p,0,0)$  \\
&&&&\\[-2ex]
        &	&  $(0,1,0;0,p,0)$ &  &  $(0,1,0;0,-p,0)$ \\
&&&&\\[-2ex]
        &	&  $(0,0,0;0,0,1)$ &  &  $(0,0,0;0,0,1)$ \\
\hline
3	& II$C$  & $(1,0,0;p,0,0)$  &  3  & $(1,0,0;-p,0,0)$  \\
&&&&\\[-2ex]
        &	&  $(0,0,0;0,1,0)$ &  &  $(0,0,0;0,1,0)$ \\
&&&&\\[-2ex]
        &	&  $(0,0,0;0,0,1)$ &  &  $(0,0,0;0,0,1)$ \\
\hline
\end{tabular}
\end{center}
\end{table}

\medskip
{\bf Example~1.} Consider the matrix 
\[
J=\begin{pmatrix} 1&0&0\\0&0&0\\0&1&1\\0&0&-2\\-1&0&1\\\frac12&0&0\end{pmatrix}
\]
Its columns are the linearly independent pitch-zero screws $\bs_1$, $\bs_2$, $\bs_3$.  Since $\bs_2-\bs_3\in q_\infty$, this is a I$B_0$ system. We have:
\[
\det(J^\T Q_h J)=8h
\]
so that its principal pitch is $h=0$. Moreover, it can be seen that the rank condition in \cref{th:hpiexist} fails for $h=0$ so this matrix does not have a $0$-pseudoinverse.    It is straightforward to compute:
\[
J^{+h}=\begin{pmatrix}
1&-\dfrac12&0&0&0&0\\[2ex]
\dfrac3{16h}&-\dfrac{16 h^2+16 h+3}{32h}&1&\dfrac12&0&-\dfrac3{8 h}\\[2ex]
\dfrac1{16h}&\dfrac{16 h^2+16 h+3}{32h} &0&-\dfrac12&0&-\dfrac1{8h}
\end{pmatrix}
\]

\section{Projection Operators and Robotics}
\label{s:proj}
\subsection{Manipulator Kinematics}
In this section we return to the problem mentioned in the introduction, of controlling a robot with less than 6 degrees-of-freedom.   This problem is important for robotics but is also of interest in computer animation, see \cite{Buss+Kim} for example.  As an example,  consider the following robot whose kinematic structure is typical of many commercially available robots for hobbyists and for educational purposes.

   \begin{figure}[h!btp]
      \centering
      \def\svgwidth{7cm}
      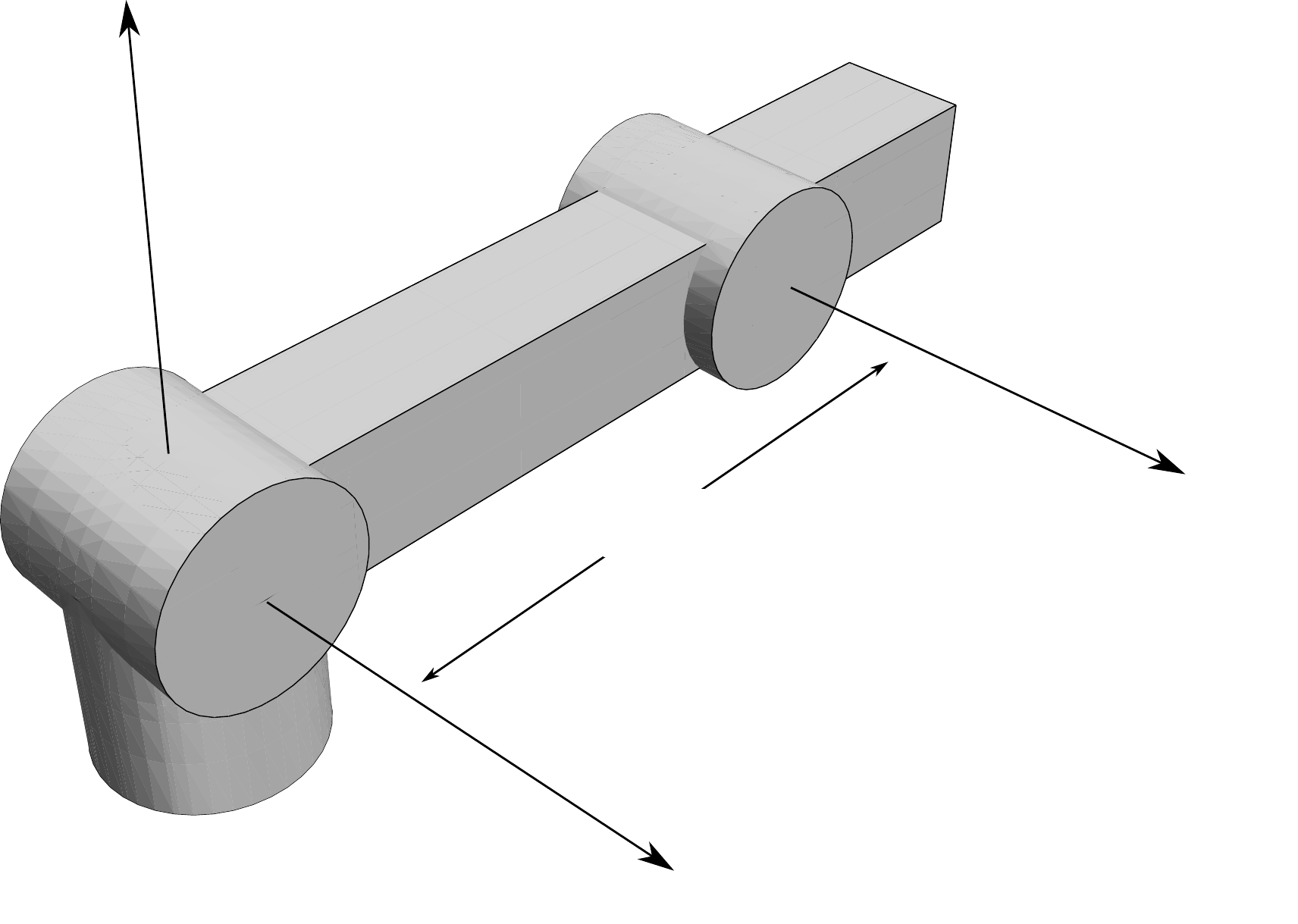
      \caption{A 3R Robot Arm}
      \label{f:3rob}
   \end{figure}

\bigskip
\noindent{\bf Example~2.}   Consider the 3-joint robot arm illustrated in Figure \ref{f:3rob}.  This arm has 3 revolute (R) or hinge joints.  The joints can be represented as pitch 0 twists or lines as noted above.  With  coordinates as shown in the figure, the Jacobian matrix of the robot will be:
\[
J=\begin{pmatrix} 0&1&1\\ 0&0&0\\ 1&0&0\\  0&0&0\\   0&0&0\\ 0&0&-l\end{pmatrix}
\]
Now suppose that, in its illustrated configuration, it is desired that the end-effector of the robot move with a  velocity given by the twist $\bs_d$.  If the joint velocities of the three revolute joints are $\dot{\theta}_1,\,\dot{\theta}_2$ and $\dot{\theta}_3$, then the velocity of the robot's end-effector will be  $J\dot{\bth}$, where $\dot{\bth} =(\dot{\theta}_1,\,\dot{\theta}_2,\,\dot{\theta}_3)^\T$.  Since the Jacobian has maximum rank~3, clearly not every desired twist can be achieved by the robot, but we would like to find values for the joint rates in such a way that the robot's (twist) velocity approximates the desired twist and is exactly the desired twist, $\bs_d=J \dot{\bth}$, when $\bs_d$ does lie in the screw system spanned by the joints of the robot.  In effect, we seek a projection operator which projects the whole Lie algebra onto the column space of the Jacobian.  In view of the property (hP1) of  $h$-pseudoinverses, it is clear that $P_h=JJ^{+h}$ satisfies this requirement.

For this example, there is no $0$-pseudoinverse, since two of the axes are coplanar; in fact, joints 1 and~2 are concurrent and joints 2 and~3 are parallel, so these lines generate a I$B_0$ 3--system (see Section~\ref{s:involution} above).  However, for any other finite $h$ we have,
\[
P_h=JJ^{+h}=
\begin{pmatrix}
1&0&0&-\frac{1}{2h}&0&0\\
0&0&0&0&0&0\\
0&0&1&0
&0&0\\
0&0&0&0&0&0\\
0&0&0&0&0&0\\
0&0&0&0&0&1
\end{pmatrix}
\]

Notice that the kernel of this operator depends on the value of $h$; in fact the twists:
\[
\bz_1=\begin{pmatrix}0\\ 1\\ 0\\0\\ 0\\0\end{pmatrix},\,
\bz_2=\begin{pmatrix}0\\ 0\\0\\ 0\\1\\0\end{pmatrix}\quad\mbox{and}\quad
\bz_3=\begin{pmatrix}1\\ 0\\ 0\\ 2h\\0\\0\end{pmatrix},
\]
span the kernel of $P_h$.  

More generally, suppose that $S$ is the linear system of twists generated by the columns of some Jacobian matrix $J$, then it is easy to see that the kernel of the projection operator $P_h=JJ^{+h}$ is just the space of $h$-reciprocal twists, $S^{\perp h}$.  Since  $P_h=J(J^\T Q_hJ)^{-1}J^\T Q_h$, it follows that $P_h\bz=\b0$ if and only if $J^\T Q_h\bz=\b0$.  Notice that, by Theorem \ref{th:nohpi}, if the $h$-pseudoinverse exists then $S$ and $S^{\perp h}$ have trivial intersection.

\bigskip
\noindent{\bf Example~3.}  A second application to robot control problems is to find all possible joint velocities for an under-actuated robot arm, in a given non-singular configuration, so that a point on its end-effector moves in a given direction.  Suppose the arm has $m<6$ actuated joints with joint variables $\th=(\th_1,\dots,\th_m)\in M$ where $M$ is the joint space of the arm. Let $f:M\to SE(3)$ denote its kinematic mapping. Given a point $\bx$ on its end-effector,  the motion of $\bx$ is described by $f_\bx:M\to\bbR^3$ where $f_\bx(\th)=f(\th).\bx$.    In order for the point $\bx$ to move parallel to the direction $\bq$ at the configuration $\th$, we require to find $\dot{\th}\in T_\th M\cong \bbR^m$ and $\gl\in\bbR$ so that $Df_\bx(\th)\dot{\th}=\gl\bq$. 

To simplify things, one may recalibrate the joint variables so that $f(\th)$ is the identity in $SE(3)$. Then the derivative is a map $Df:\bbR^m\to\Lse(3)$. We can then write $Df(\th)$ as a Jacobian $J$ in which the columns are the $m$ joint twists. Since the action of $SE(3)$ on $\bbR^3$ is affine, the derivative $Df_\bx$ is just the composition $ev_\bx\circ J$ where $ev_\bx:\Lse(3)\to\bbR^3$ describes the action of the Lie algebra on space: $ev_\bx(\bgw,\bv)=\bgw\x\bx+\bv$. Denote by $L\subset\bbR^3$ the line $\{\gl\bq\,:\,\gl\in\bbR\}$. Thus, we require to solve the condition:
\begin{equation}
\label{e:ptmotion}
ev_\bx(J\dot{\th})\in L.
\end{equation}
Now $ev_\bx$ is surjective, so the inverse image ${ev_\bx}^{-1}(L)$ must be a subspace of the same codimension in $\Lse(3)$ and hence is a 4-system~$S$.  The problem therefore reduces to finding $\dot{\th}\in\bbR^m$ so that $J\dot{\th}\in S$. Assuming the existence of a pseudoinverse $J^{+h}$, we determine the solution set to be $\dot{\th}\in J^{+h}(S)$. Further, the 4-system $S$ can be identified as the Lie algebra of the cylinder subgroup consisting of screw motions (including rotation) about the line of motion $\bx + t\bq$ together with  translation in the direction of the velocity vector $\bv$.  This has Gibson--Hunt type II$A$, the 4-system being reciprocal to the corresponding 2-system in~\Cref{tab:2systems}. 

\subsection{Optimality}
Moore--Penrose pseudoinverses have the property that they determine optimal solutions to equations, in the sense that attempting to solve the linear system $A\bx=\bb$, the vector $\bv=A^+\bb$ (where $A$ is $n\x m$) minimises the least-squares error~\cite{penrose}:
\begin{equation}
\label{e:mpmin}
\forall\,\bx\in\bbR^m,\quad\|A\bv-\bb\|\leq\|A\bx-\bb\|,
\end{equation}
where the Euclidean norm is used in $\bbR^n$. In the same way,  hyperbolic pseudoinverses `optimise' an appropriate function, though the indefiniteness of the quadratic form $Q_h$ means that the approximation to the solution is not necessarily minimised. More precisely, the $h$-pseudoinverse gives a solution to a linear system of equations that is a stationary point of an appropriate cost function $\Phi_h$.
\begin{theorem}
\label{t:psopt}
Let $J$ be a $6\x m$ Jacobian matrix, $\bs\in \Lse(3)$ a twist and suppose that $\bv=J^{+h}\bs$. Define 
\begin{equation}
\label{e:hobjfn}
\Phi_h:\bbR^m\to\bbR,\quad \Phi_h(\bx)=(\bs-J\bx)^\T Q_h(\bs-J\bx).
\end{equation}
Then for each $i=1,\dots,m$,
\begin{equation}
\label{e:psopt}
\left.\frac{\d}{\d x_i}\Phi_h(\bx)\right|_{\bx=\bv}=0.
\end{equation}

\end{theorem}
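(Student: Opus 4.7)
The plan is to show that the gradient of the quadratic function $\Phi_h$ vanishes at $\bv = J^{+h}\bs$ by a direct computation, reducing the claim to a matrix identity that follows from the defining properties (hP\ref{hP1})--(hP\ref{hP4}) of the $h$-pseudoinverse together with the symmetry $Q_h^\T = Q_h$ (which is immediate from \cref{eq:invform}).

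First I would expand
\[
\Phi_h(\bx) = \bs^\T Q_h \bs - 2\bs^\T Q_h J\bx + \bx^\T (J^\T Q_h J)\bx,
\]
noting that the cross term combines because $Q_h$ is symmetric. Differentiating componentwise (with the symmetric quadratic form differentiating in the standard way) gives
\[
\frac{\d}{\d x_i}\Phi_h(\bx) = 2\bigl(J^\T Q_h J\bx - J^\T Q_h \bs\bigr)_i,
\]
so \cref{e:psopt} reduces to showing $J^\T Q_h J\bv = J^\T Q_h \bs$, i.e.\ that $J^\T Q_h (J J^{+h})\bs = J^\T Q_h\bs$. Since $\bs$ is arbitrary, it suffices to prove the matrix identity $J^\T Q_h (JJ^{+h}) = J^\T Q_h$.

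The main (small) obstacle is translating the abstract self-adjointness axiom (hP\ref{hP3}) into a workable matrix identity. Recalling from \cref{eq:genadj,eq:jacpi} that the $h$-adjoint of an endomorphism of $\Lse(3)$ is $A^{*h} = Q_h^{-1} A^\T Q_h$, axiom (hP\ref{hP3}) reads $(JJ^{+h})^\T Q_h = Q_h(JJ^{+h})$. Right-multiplying this by $J$ and applying (hP\ref{hP1}) in the form $JJ^{+h}J = J$ gives $(JJ^{+h})^\T Q_h J = Q_h J$. Transposing both sides (and using symmetry of $Q_h$) yields precisely $J^\T Q_h (JJ^{+h}) = J^\T Q_h$, which is what we need.

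Putting these pieces together establishes the vanishing of every partial derivative at $\bv$. Note that the proof does not depend on whether $m<6$ or $m\geq 6$, nor on the explicit formulas \cref{e:hpi6,e:hpiinj}; only the defining axioms of an $h$-pseudoinverse (assumed to exist) and the symmetry of $Q_h$ are used. One could additionally observe that the Hessian of $\Phi_h$ equals $2J^\T Q_h J$, which is indefinite in general, explaining the remark preceding the theorem that $\bv$ need not minimise $\Phi_h$ but is only guaranteed to be a stationary point.
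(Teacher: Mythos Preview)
Your proof is correct and the gradient computation matches the paper's exactly. The only difference is in the final step: the paper simply invokes the explicit formula $J^{+h}=(J^\T Q_hJ)^{-1}J^\T Q_h$ from \cref{e:hpiinj}, so that $(J^\T Q_hJ)J^{+h}-J^\T Q_h=0$ is immediate. You instead derive the identity $J^\T Q_h(JJ^{+h})=J^\T Q_h$ from the axioms (hP\ref{hP1}) and (hP\ref{hP3}) alone. This buys you a little extra generality---your argument does not rely on $J$ having full column rank or on $m<6$---and it makes transparent exactly which of the defining properties are actually used (neither (hP\ref{hP2}) nor (hP\ref{hP4}) is needed). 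The paper's version is shorter in the full-rank setting it implicitly assumes, but yours is the cleaner structural argument.
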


\begin{proof}
In general the partial derivatives can be written in the matrix form,
\[
\frac{\d}{\d \bx}\Phi_h(\bx)=\begin{pmatrix}
\frac{\d}{\d x_1}\Phi_h(\bx)\\
\vdots\\
\frac{\d}{\d x_6}\Phi_h(\bx)
\end{pmatrix}= -2J^\T Q_h(\bs-J\bx).
\]
Setting $\bx=\bv=J^{+h}\bs$ gives,
\[
2\big((J^TQ_hJ)J^{+h}-J^TQ_h\big)\bs=\b0\]
since $J^{+h}=(J^TQ_hJ)^{-1}J^TQ_h$.
\end{proof}

\bigskip
\noindent{\bf Example~4.} With $J$ as in Example~1, and $\bs=\left(0, \frac35, \frac45, 1, -\frac45, \frac35\right)^\T$, we obtain for $h\neq0$,
\[
\bv= J^{+h}\bs=\frac1{160h}\begin{pmatrix}-48h\\-48h^2+160h-45\\48h^2-32h-15\end{pmatrix}
\]
and the error is given by
\[
\Phi_h=-\frac{9(80h^2+64h-25)}{800h}
\]
Note that there are two values of $h$ where the error $\Phi_h$ vanishes.  While one of these may appear a sensible choice therefore, in a practical control problem the twist $\bx$ will vary so that solutions meeting this requirement are unlikely to exist for a given $h$ and all $\bx$ of interest. 

There are several other methods used in practice, see \cite{Buss+Kim}  again.  In particular the damped least squares method.  A geometric version of this method can be derived simply from the optimality conditions considered above.  The cost function to consider is now,
\[\Psi(\bx)=(\bs-J\bx)^\T Q_h(\bs-J\bx)+\bx^T\Lambda\bx\]
where $\Lambda$ is a symmetric matrix; if $\Lambda$ is the identity matrix then $\bx^T\Lambda\bx$ is a sum of squares.   In this case the partial derivatives can be written as,
\[
\frac{\d}{\d \bx}\Psi(\bx)= -2J^\T Q_h(\bs-J\bx)+2\Lambda\bx.
\]
Setting this to zero and solving for $\bx$ gives,
\[\bx=(J^TQ_hJ+\Lambda)^{-1}J^TQ_h\bs.\]
This gives a modified projection operator, $J(J^TQ_hJ+\Lambda)^{-1}J^TQ_h$, which might be used in cases where the pseudo-inverse $J^{+h}$ doesn't exist.

However, these are not the most general projection operators that could be used.  Suppose that $S$ and $Z$ are complementary subspaces of twists, so that $S\oplus Z=\Lse(3)$.  Then it is always possible to find a projection operator onto $S$ which has $Z$ as its null space.  In order to construct this, consider the dual, $\Lse^*(3)$, of the Lie algebra of twists.  Elements of the dual  space are linear functionals on  twists,  called {\em co-twists} or {\em wrenches}.  The dual space to $Z$ is the space of wrenches that annul all the twists in $Z$, that is:
\[ 
Z^*=\{{\cal W}\in\Lse^*(3)\,:\,{\cal W}^{\T}\bz=0,\,\forall\, \bz\in Z\}.
\]
Given a basis for $Z^*$, form the matrix $K$ whose columns are the basis vectors (written with respect to the dual Pl\"{u}cker coordinates).  The required projection operator is then given by:
\begin{equation}
\label{e:proj}
P=J(K^{\T}J)^{-1}K^{\T}.
\end{equation}
Clearly this construction can always be used, in particular when no pseudo-inverse exists.  However, there remains the problem of which of the many possible projection operators to choose in a given setting.

\subsection{Shared and Hybrid Control}
In shared control and hybrid control the task of controlling the end-effector of a robot is split in two.  In hybrid control there is a force/moment control task and a simultaneous position control task.  For example, consider using a robot to write using a pen.  The robot must maintain a constant force normal to the paper but also control the position of the pen tip on the surface of the paper.  In shared control, the robot has two positional tasks to satisfy, one task is exclusively controlled by the robot, say limiting the end-effector to move on a particular surface.   For the other task, the desired motion on the surface is generated by a human operator.  Both these types of control methods rely on projecting the error signal given by the robot's sensors into a particular twist or wrench subspace.    Note that wrenches, elements of $\Lse^*(3)$, can be used to represent force/torque vectors, see \cite[Chap. 12]{selig}.

For shared control, assume that the task is split into a pair of $h$-reciprocal spaces of twists, $S_1$ and $S_2$,  where $S_1$ is the space of motions to be controlled by the robot and $S_2$ is the space of motions to be controlled by the human operator.  If the sensors reveal a positional error $\bs_e$, then this must be projected to $S_1$ for the control system to deal with.  The part of the error in $S_2$ is left for the operator to manage.  As the two subspaces are $h$-reciprocal, $S_2=S_1^{\perp h}$, the projection operator $P_h=JJ^{+h}$ introduced above can be used, where the columns of $J$ are a basis for $S_1$.  Dually, the same construction can be used to give a projection operator for $S_2$ by using a matrix of basis elements for this space.  If the two subspaces are not $h$-reciprocal but only complementary, then the projection operator defined in (\ref{e:proj}) can be used.

Hybrid control was introduced in the early 1980s, however the original formulation was flawed because the projection operators used were not invariant under a change of  coordinates~\cite{lipkin}.    There are two spaces to consider, a space of twists $S$ containing the possible motion twist that must be controlled and a space of wrenches $S^*\subset \Lse^*(3)$  which contains the force and torque of the robot's end-effector which is to be controlled.   

These spaces must be dual to each other, i.e. ${\cal W}^T\bs=0$ for all ${\cal W}\in S^*$ and all $\bs\in S$.   This ensures that the two tasks do not interfere with each other: the motions to be controlled do no work on the wrenches to be imposed by the robot.  The projection operator $P_h=JJ^{+h}$ can be used to project a positional error onto  $S$. Force/torque error can be projected to $S^*$ with a similarly defined projection operator.  Let $W$ be the matrix whose columns are basis vectors for $S^*$.  We can define a pseudoinverse for this matrix as above,
\[
W^{+h}=(W^{\T}Q_h^{-1}W)^{-1}W^{\T}Q_h^{-1}.
\]
It can be verified that the invariant quadrics for the co-adjoint action of $SE(3)$ on $\Lse^*(3)$ are the inverses of those for the adjoint action:
\[
Q_h^{-1}=\begin{pmatrix}-2hI_3&I_3\\I_3&0\end{pmatrix}^{-1}=
\begin{pmatrix}0&I_3\\I_3&2hI_3\end{pmatrix}.
\]
The projection operator is then $P^*_h=WW^{+h}$.  Of course, other pairs of projection operators exist, as in equation (\ref{e:proj}) above.

\section{Conclusion}
There are other applications of pseudoinverses in robotics, for example to the computation of stiffness matrices for parallel manipulators.
The closely related concept of singular values and singular value and polar decomposition~\cite{bolsh,lins} is also much used in robotics and suffers from the same problems as the Moore--Penrose pseudoinverse.  That is, the standard singular value decomposition of a matrix of twists, or wrenches, is not well-defined under rigid changes of coordinates. We suggest that the approach used in this paper may offer advantages in all these problems.

\bibliographystyle{siamplain}
\bibliography{eucl_psi_references_simax-final}

\end{document}